\numberwithin{equation}{section}
  \newtheorem{theorem}{Theorem}[section]
  \newtheorem{proposition}[theorem]{Proposition}
  \newtheorem{example}[theorem]{Example}
\title[On a family of Einstein like Walker metrics]{On a family of Einstein like Walker metrics}
\author[Issa Allassane Kaboye, Mamadou Ciss, Abdoul Salam Diallo]{Issa Allassane Kaboye*, Mamadou Ciss**, Abdoul Salam Diallo***}
\newcommand{\acr}{\newline\indent}
\address{\llap{*\,} Universit\'e Andr\'e Salifou de Zinder,\acr
D\'epartement des Sciences Exactes,\acr
B. P. 656, Zinder, Niger}
\email{allassanekaboye@yahoo.fr}
\address{\llap{**\,} Universit\'e Alioune Diop,\acr
UFR SATIC, D\'epartement de Math\'ematiques,\acr
\'Equipe de Recherche en Analyse Non Lin\'eaire et G\'eom\'etrie (ANLG),\acr
 B. P. 30, Bambey, S\'en\'egal}
\email{mamadou.ciss@uadb.edu.sn}
\address{\llap{***\,} Universit\'e Alioune Diop,\acr
UFR SATIC, D\'epartement de Math\'ematiques,\acr
\'Equipe de Recherche en Analyse Non Lin\'eaire et G\'eom\'etrie (ANLG),\acr
B. P. 30, Bambey, S\'en\'egal }
\email{abdoulsalam.diallo@uadb.edu.sn}
\subjclass[2010]{53B30, 53C25, 53C50}
\keywords{Einstein manifold, Einstein like manifold, Walker metrics.}
\begin{document}

\begin{abstract}  
A four dimensional pseudo-Riemannian manifold of signature $(2, 2)$ 
is called a Walker manifold if it admits a parallel degenerate plane field. 
In this paper, we study the curvature properties of such a class of 
four dimensional Walker manifolds. In particular, we characterize Walker metrics of a given form which are Einstein-like.
\end{abstract}

\maketitle

\section{Introduction} 
\noindent
A pseudo-Riemannian metric $g$ on a four dimensional manifold $M$ is said to be 
a Walker metric if there exists a two dimensional null distribution on $M$, which
is parallel with respect to the Levi-Civita connection of $g$. This type of metrics 
has been introduced by Walker \cite{Walker1950} who has shown that they have 
a local canonical form depending on three smooth functions. Various curvature
properties of some special classes of Walker metrics have been studied 
in \cite{Brozos2009} where several examples of neutral metrics with interesting 
geometric properties have been given. Conditions for a restricted four dimensional 
Walker manifold to be Einstein, locally symmetric,  Einstein and locally conformally 
flat are given in \cite{Chaichi2005}.  Examples of  Walker Osserman metrics of signature 
$(3,3)$ which admits a field of parallel null $3$-planes are given in 
\cite{Diallo2011, Diallo2017}. A lot of examples of Walker structures 
have appeared, which proved to be important in differential geometry 
and general relativity as well .\\

\noindent
Two of the most extensively studied objects in Riemannian geometry 
and physics are Einstein manifolds and the Riemannian manifolds with constant scalar curvature. We denote by $\mathcal{E}$ the class of 
Einstein manifolds, by $\mathcal{C}$ the class of Riemannian manifolds with constant scalar curvature and by $\mathcal{P}$ the class of manifolds with parallel Ricci tensor. We have the following inclusion:
\begin{eqnarray*}
\mathcal{E} \subset \mathcal{P} \subset \mathcal{C}.
\end{eqnarray*}
Gray \cite{Gray1978} introduced two interesting classes of Riemannian manifolds which generalizes the concept of Einstein manifolds. The 
class of Riemannian manifolds admitting a cyclic parallel Ricci tensor denote by $\mathcal{A}$ and the class of Riemannian manifolds 
for which the Ricci tensor is a Codazzi tensor denote by $\mathcal{B}$. Note that, the classes of Riemannian manifolds $\mathcal{A}$
and $\mathcal{B}$ lie between the classes of Riemannian manifolds 
with parallel Ricci tensor $\mathcal{P}$ and the class of Riemannian manifolds with constant scalar curvature $\mathcal{C}$.\\

\noindent
In \cite{Batat2009}, the authors study the curvature properties of such 
a class of four-dimensional Walker manifolds. In particular, they characterize Walker metrics of a given form which are Einstein-like, conformally flat, locally symmetric. Motivated, by the following papers
\cite{Batat2009, Calvaruso2007}, 
we study a class of Walker metrics and we give condition for the Walker metric to be parallel Ricci tensor,  cyclic parallel Ricci tensor and the Ricci tensor is a Codazzi tensor. In this way, we organized 
the paper as follow:  in section \ref{Description}, we shall describe the curvature of the metric considered. In section \ref{Einsteinlike}, 
Einstein-like Walker metrics will be classified.

\section{Description of the metric}\label{Description}

\noindent
In this paper, we consider the family of metrics $g_{a}$ on
$O\subset \mathbb{R}^4$ given by
\begin{eqnarray}\label{eq21} 
g_{a} & = & 2(dx_1\circ dx_3 + dx_2 \circ dx_4 ) 
+ a(x_1,x_2,x_3,x_4)dx_3\circ dx_3\nonumber\\
&& + a(x_1,x_2,x_3,x_4)dx_4 \circ dx_4 ,
\end{eqnarray}
where $a$ is some fonction depending on $(x_1,x_2,x_3, x_4)$. We 
denote by 
$\partial_i := \frac{\partial}{\partial x_i}$ and
$a_i := \frac{\partial a(x_1, x_2, x_3, x_3)}{\partial x_i}$. 
From a straightforward calculation shows that the non-zero components  of the Levi-Civita connection of the metric (\ref{eq21}) are given by:
\begin{eqnarray}\label{eq22}
\nabla_{\partial_1}\partial_3 &=& \frac{1}{2}a_1\partial_1 , \; \nabla_{\partial_1}\partial_4 = \frac{1}{2}a_1\partial_2 , \;
\nabla_{\partial_2}\partial_3 = \frac{1}{2}a_2\partial_1 , \;\nabla_{\partial_2}\partial_4 = \frac{1}{2}a_2\partial_2 , \nonumber \\
\nabla_{\partial_3}\partial_3 &=& \frac{1}{2}(aa_1 + a_3)\partial_1
+ \frac{1}{2}(aa_2 - a_4)\partial_2 - \frac{1}{2}a_1\partial_3
- \frac{1}{2}a_2\partial_4 , \nonumber \\
\nabla_{\partial_3}\partial_4 &=& \frac{1}{2}a_4\partial_1
+ \frac{1}{2}a_3\partial_2 ,\nonumber\\
\nabla_{\partial_4}\partial_4 &=& \frac{1}{2}(aa_1 - a_3)\partial_1
+ \frac{1}{2}(aa_2 + a_4)\partial_2 - \frac{1}{2}a_1\partial_3
- \frac{1}{2}a_2\partial_4 .
\end{eqnarray}   

A curve $\gamma(t) = (x_1(t), x_2(t), x_3(t), x_4(t))$ in (\ref{eq21}) 
is a geodesic if and only if the following equations are satisfied:
\begin{eqnarray*}
0 &=& \ddot{x}_1 + a_1\dot{x}_1\dot{x}_3 + a_2\dot{x}_2\dot{x}_3
+\frac{1}{2} (aa_1 + a_3)\dot{x}_3\dot{x}_3 + a_4\dot{x}_3\dot{x}_4\\
&& + \frac{1}{2}(aa_1 - a_3)\dot{x}_4\dot{x}_4,\\
0 &=& \ddot{x}_2 + a_1\dot{x}_1\dot{x}_4 + a_2\dot{x}_2\dot{x}_4
+ \frac{1}{2}(aa_2  - a_4)\dot{x}_3\dot{x}_3 + a_3\dot{x}_3\dot{x}_4\\
&& + \frac{1}{2}(aa_2 + a_4)\dot{u}_4\dot{u}_4,\\
0 &=& \ddot{x}_3 - \frac{a_1}{2}\dot{x}_3\dot{x}_3  
- \frac{a_1}{2}\dot{x}_4\dot{x}_4 = 0,\quad
0 = \ddot{x}_4 - \frac{a_2}{2}\dot{x}_3\dot{x}_3 
- \frac{a_2}{2}\dot{x}_4\dot{x}_4.
\end{eqnarray*}
Recall that, a pseudo-Riemannian manifold $(M,g)$ is geodesically complete  if all geodesics exist for all time. The above PDE system is
hard to solve. So the geodesically completeness is not easy to prove.\\

Using (\ref{eq22}), we can completely determine the curvature tensor
of the metric (\ref{eq21}) by the following formula:
$\mathcal{R}(\partial_i, \partial_j)\partial_k = ([\nabla_{\partial_i}, 
\nabla_{\partial_j}]  - \nabla_{[\partial_i, \partial_i]}) \partial_k$. Then, 
taking into account (\ref{eq21}), we can determine all components 
of the $(0,4)$-curvature tensor 
$R_{ijkl}=g_{a}(\mathcal{R}(\partial_i,\partial_j)\partial_k, \partial_l)$. 
We obtain that, the non-zero component of the $(0,4)$-curvature tensor 
of the metric (\ref{eq21}) are given by:
\begin{eqnarray}\label{eq23}
R_{1313} &=& \frac{1}{2}a_{11}, \;
R_{1323} = \frac{1}{2}a_{12}, \;
R_{1424} = \frac{1}{2}a_{12} ,\;
R_{1334} = \frac{1}{4}(a_1a_2-2a_{14}) , \nonumber\\
R_{1414} & = & \frac{1}{2}a_{11} ,\;
R_{1434} = \frac{1}{4}(2a_{13} - a^{2}_1) ,\;
R_{2323} = \frac{1}{2}a_{22} , \nonumber\\
R_{2334} &=& \frac{1}{4}(a^{2}_2-2a_{24}) , \;
R_{2424} = \frac{1}{2}a_{22} ,\; 
R_{2434} = \frac{1}{4}(2a_{23} - a_1a_2) ,\nonumber\\
R_{3434} &=& \frac{1}{4}(2a_{33} + 2a_{44} - aa^{2}_1 - aa^{2}_2).
\end{eqnarray}
Using this, we can calculate the compoments $\rho_{ij}$ with respect 
to $\partial_i$ of the Ricci tensor of the metric (\ref{eq21}):  
\begin{eqnarray}\label{eq24}
\rho_{13} &=& \frac{1}{2}a_{11}, \;
\rho_{14} = \frac{1}{2}a_{12}, \;
\rho_{23} = \frac{1}{2}a_{12}, \;
\rho_{24} = \frac{1}{2}a_{22}, \nonumber\\
\rho_{33} &=& \frac{1}{2}(a^2_{2} + aa_{11} + aa_{22}
- 2a_{24}), \nonumber \\
\rho_{34} &=& \frac{1}{2}(-a_1a_2 + a_{14} + a_{23}),\nonumber\\
\rho_{44} &=& \frac{1}{2}(a^{2}_{1} + aa_{11} - 2a_{13} + aa_{22}).
\end{eqnarray}
The scalar curvature defined by $\tau = \mathrm{trace} \rho$ of the 
metric (\ref{eq21}) is:
\begin{eqnarray*}
\tau = a_{11} + a_{22}.
\end{eqnarray*}
We denote by $\mathcal{F}$ the Einstein tensor as 
$\mathcal{F}(X,Y) := \rho (X,Y) - \frac{\tau}{4} \cdot g(X,Y)$. We have:
\begin{eqnarray*}
\mathcal{F}_{13} &=& -\mathcal{F}_{24} =\frac{1}{4}(a_{11} - a_{22}),\;
\mathcal{F}_{14} = \mathcal{F}_{23} =\frac{1}{2}a_{12}, \\
\mathcal{F}_{33}&=& \frac{1}{4}(2a^{2}_{2} + aa_{11} + aa_{22} - 4a_{24}),\\
\mathcal{F}_{34} &=& \frac{1}{2}(- a_1a_2 + a_{14} + a_{23}),\\
\mathcal{F}_{44} &=& \frac{1}{4}(2a^{2}_{1} + aa_{11} - 4a_{13}
+ aa_{22}).
\end{eqnarray*}
The Walker metric (\ref{eq21}) is Einstein if $\mathcal{F}(X,Y)= 0$ for 
any vector fields $X,Y$. This is equivalent to:
\begin{eqnarray*} 
a_{11} -  a_{22} = 0,\; \; a_{12} = 0, \; \;
2a^{2}_{2} + aa_{11} + aa_{22} - 4a_{24}  = 0, \\
a_1a_2 - a_{14} - a_{23} = 0,\; \;
2a^{2}_{1} + aa_{11} + aa_{22} - 4a_{13}  =  0.
\end{eqnarray*}

\noindent
The non-zero compoments of the Ricci operator $Q$, given by $g(Q(X),Y)=\rho(X,Y)$ with respect to $\partial_i$ are: 
 \begin{eqnarray}\label{eq25}
Q_{11} &=& \rho_{13}, \; Q_{12} = \rho_{14},\; 
Q_{13} = \rho_{33} - a\rho_{13},\; Q_{14} = \rho_{34} - a\rho_{14},
\nonumber \\
Q_{21} & = & \rho_{14}, \; Q_{22} = \rho_{24}, \; 
Q_{23} = \rho_{34} - a\rho_{14}, \; Q_{24} = \rho_{44} - a\rho_{24}\nonumber \\
Q_{33} &=& \rho_{13}, \; Q_{34} = \rho_{14}, \; Q_{43} = \rho_{14}, \;
Q_{44} = \rho_{24}.
 \end{eqnarray}
Then, according to the above, it is easy to see that the eigenvalues of Ricci operator are solutions of
\begin{eqnarray*}
\left[\left(\rho_{13}-\lambda\right)\left(\rho_{24}-\lambda\right)-\rho^{2}_{14}\right]^2.
\end{eqnarray*}
If $\rho_{13}=\rho_{24}\;\mbox{and}\;\rho_{14}=0$, $\lambda=\rho_{13}=\frac{1}{2}a_{11}$ is the only Ricci eigenvalue. In this case, it is easy 
to see that the corresponding eigenspace is not four-dimensional (and 
so, $Q$ is not diagonalizable), unless $\rho_{13}-\rho_{24}=\rho_{14}=\rho_{33}-a\rho_{13}=\rho_{34}=\rho_{44}-a\rho_{13}=0$; that is,  
$a$ satifies
\begin{eqnarray}\label{eq26}
a_{11}-a_{22}&=& a_{12} = a^{2}_{2} + aa_{11} - 2a_{24} 
= a_{1}a_{2} - a_{14} - a_{23}
\cr&=&a^{2}_{1}+aa_{11}-2a_{13}=0.
\end{eqnarray}
If $\rho_{13}\neq\rho_{24}\;\mbox{or}\;\rho_{14}\neq0$, then $Q$ admits the eigenvalues
\begin{eqnarray*}\label{eq27}
\lambda_{\varepsilon}= \frac{\rho_{13}+\rho_{24}+ \varepsilon\sqrt{(\rho_{13}-\rho_{24})^{2}+\rho_{14}^2}}{2},
\end{eqnarray*}
where $\varepsilon=\pm 1$, each of multiplicity $2$. In this case, it is
easily seen by (\ref{eq21}) that $Q$ is not diagonalizable, unless
\begin{eqnarray}
 2\rho_{14}\rho_{34}-2a\rho_{14}^2- \rho_{13}\rho_{44}
 +  2a\rho_{13}\rho_{24}- \rho_{24}\rho_{33} &=& 0  \nonumber\\
 \rho_{44}-a\rho_{24} +  \rho_{33} - a \rho_{13} &=& 0, 
\end{eqnarray}
equivalently, equations above (\ref{eq27}) are equivalent to requiring that the defining function $a$ satisfies :
\begin{eqnarray}\label{eq28}
0 &=& a_{1}^{2} - 2a_{13} + aa_{22} + a_{2}^{2} + a a_{11} -2 a_{24}\nonumber \\
 &=&  a_{12} (-a_1a_2 + a_{14} + a_{23} - aa_{12}) \nonumber \\
&& - \frac{1}{2} a_{11} (a_1^2 + aa_{11}- 2 a_{13} ) 
 - \frac{1}{2}a_{22} ( a_{2}^2 + a a_{22} - 2 a_{24} ). 
\end{eqnarray}
Note that (\ref{eq26}) implies (\ref{eq27}). Hence, we can state the following result:

\begin{proposition}
The Walker metric  (\ref{eq21}) has a diagonalizable Ricci operator
only if its defining function $a$ satisfies (\ref{eq28}).
\end{proposition}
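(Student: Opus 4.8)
The plan is to read off from (\ref{eq25}) that, in the coordinate frame $\{\partial_i\}$, the Ricci operator $Q$ has the block form
\[
Q=\begin{pmatrix} A & B \\ 0 & A \end{pmatrix},\qquad A=\begin{pmatrix} \rho_{13} & \rho_{14} \\ \rho_{14} & \rho_{24}\end{pmatrix},\quad B=\begin{pmatrix} b_1 & b_2 \\ b_2 & b_3\end{pmatrix},
\]
where $b_1=\rho_{33}-a\rho_{13}$, $b_2=\rho_{34}-a\rho_{14}$, $b_3=\rho_{44}-a\rho_{24}$, and $A,B$ are symmetric. Since $Q$ is block upper triangular with equal diagonal blocks, its characteristic polynomial is $\chi_A(\lambda)^2$ with $\chi_A(\lambda)=\lambda^2-\operatorname{tr}(A)\lambda+\det A=(\rho_{13}-\lambda)(\rho_{24}-\lambda)-\rho_{14}^2$, which recovers the factorization already displayed. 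As $Q$ is diagonalizable precisely when its minimal polynomial is squarefree, I would extract the minimal polynomial from the single matrix identity obtained by evaluating $\chi_A$ at $Q$.

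First I would compute $\chi_A(Q)=Q^2-\operatorname{tr}(A)\,Q+(\det A)\,I$. Using $Q^2=\begin{pmatrix} A^2 & AB+BA \\ 0 & A^2\end{pmatrix}$ and the Cayley--Hamilton relation $A^2=\operatorname{tr}(A)\,A-(\det A)\,I$ for the diagonal blocks, everything cancels except the upper-right block, giving
\[
\chi_A(Q)=\begin{pmatrix} 0 & AB+BA-\operatorname{tr}(A)\,B \\ 0 & 0\end{pmatrix}.
\]
Thus the problem collapses to the symmetric $2\times 2$ equation $AB+BA=\operatorname{tr}(A)\,B$, to be combined with a case distinction according to whether $\chi_A$ is already squarefree.

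Then I would split into the two cases treated in the discussion preceding the statement. In the generic case $\rho_{13}\neq\rho_{24}$ or $\rho_{14}\neq 0$, the quadratic $\chi_A$ has distinct roots and is therefore the squarefree part of $\chi_A^2$; hence $Q$ is diagonalizable if and only if $\chi_A(Q)=0$, that is $AB+BA=\operatorname{tr}(A)\,B$. Expanding this entrywise yields $\rho_{14}(b_1+b_3)=0$, $(\rho_{13}-\rho_{24})b_1+2\rho_{14}b_2=0$ and $(\rho_{24}-\rho_{13})b_3+2\rho_{14}b_2=0$, which in this case are equivalent to $b_1+b_3=0$ together with $2\rho_{14}b_2-\rho_{13}b_3-\rho_{24}b_1=0$ --- precisely the two displayed equations preceding (\ref{eq28}). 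Substituting the components (\ref{eq24}) then rewrites these as the two scalar conditions collected in (\ref{eq28}). In the remaining case $\rho_{13}=\rho_{24}$ and $\rho_{14}=0$ one has $A=\rho_{13}I$, so $\chi_A(Q)=0$ holds identically while $\chi_A=(\lambda-\rho_{13})^2$ is not squarefree; here diagonalizability forces $Q=\rho_{13}I$, i.e.\ $B=0$, which is exactly (\ref{eq26}), and, as already noted, (\ref{eq26}) implies (\ref{eq28}). In both cases diagonalizability of $Q$ forces (\ref{eq28}), which is the asserted ``only if''.

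I expect the main obstacle to be the degenerate case $A=\rho_{13}I$: there the identity $\chi_A(Q)=0$ is satisfied automatically and hence carries no information, so one cannot argue through $\chi_A(Q)=0$ alone but must invoke squarefreeness of the minimal polynomial directly and reduce to $B=0$. The remaining work --- expanding $AB+BA=\operatorname{tr}(A)\,B$ and substituting (\ref{eq24}) to recover (\ref{eq28}) --- is routine, but the bookkeeping of signs and of the factor $\tfrac12$ in the Ricci components must be done carefully to land exactly on (\ref{eq28}).
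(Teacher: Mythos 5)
Your argument is correct and follows the same route as the paper: the paper's ``proof'' is exactly the discussion preceding the statement, which splits into the cases $\rho_{13}=\rho_{24},\ \rho_{14}=0$ versus not, derives the two unnumbered conditions, and translates them into (\ref{eq28}); your block decomposition $Q=\left(\begin{smallmatrix} A & B \\ 0 & A\end{smallmatrix}\right)$, the Cayley--Hamilton computation of $\chi_A(Q)$, and the minimal-polynomial criterion simply make explicit the linear algebra that the paper dismisses with ``it is easily seen''. Your reduction of the three entrywise equations of $AB+BA=\operatorname{tr}(A)B$ to $b_1+b_3=0$ and $2\rho_{14}b_2-\rho_{13}b_3-\rho_{24}b_1=0$, and the substitution of (\ref{eq24}), do land exactly on (\ref{eq28}), so the proposal is a complete (indeed more rigorous) version of the paper's argument.
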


Note that, Ricci-parallel Riemannian manifolds have a diagonalizable 
Ricci curvature and are therefore isometric to a product of Einstein manifolds, at least locally. This is no longer true for pseudo-Riemannian manifolds \cite{Boubel2001}.\\ 

We can now calculate the covariant derivative of the Ricci tensor 
$\nabla\rho$ of the metric (\ref{eq21}).  By definition:
\begin{eqnarray*}
(\nabla_{\partial_i}\rho)_{jk} = \nabla_{\partial_i}\rho(\partial_j,\partial_k)
- \rho(\nabla_{\partial_i} \partial_j, \partial_k)
- \rho(\partial_j, \nabla_{\partial_i} \partial_k),
\end{eqnarray*}
By using (\ref{eq22}) and (\ref{eq24}), we prove the following:

\newpage
\begin{proposition}
The non-zero components of the covariant derivative $\nabla\rho$ 
of the Walker metric  (\ref{eq21})  are given by
\begin{eqnarray*}
(\nabla_{\partial_1}\rho)_{13}&=& \frac{1}{2}a_{111}, \;
(\nabla_{\partial_3}\rho)_{13} = \frac{1}{4}(2a_{113}+a_2a_{12}), \;
(\nabla_{\partial_1}\rho)_{14} = \frac{1}{2}a_{121},\\
(\nabla_{\partial_4}\rho)_{13} &=& \frac{1}{4}(2a_{114}-a_1a_{12}),\;
(\nabla_{\partial_3}\rho)_{14} = \frac{1}{4}(2a_{123}-a_1a_{12}),\\
(\nabla_{\partial_4}\rho)_{14} &=& \frac{1}{4}(2a_{124}-a_1a_{22}+a_1a_{11}+a_2a_{12}),\\
(\nabla_{\partial_3}\rho)_{23} &=& \frac{1}{4}(2a_{123}-a_2a_{11}+a_1a_{12}+a_2a_{22}),\\
(\nabla_{\partial_1}\rho)_{24} &=& \frac{1}{2}a_{122}, \;
(\nabla_{\partial_2}\rho)_{24} = \frac{1}{2}a_{222},\;
(\nabla_{\partial_3}\rho)_{24} = \frac{1}{4}(2a_{223} - a_2a_{12}),\\
(\nabla_{\partial_4}\rho)_{24} &=& \frac{1}{4}(2a_{224} + a_1a_{12}),
\; (\nabla_{\partial_4}\rho)_{23} = \frac{1}{4}(2a_{124}-a_2a_{12}),\\
(\nabla_{\partial_1}\rho)_{33} &=& \frac{1}{2}(2a_2a_{12}+aa_{111}+a_1a_{22}+aa_{221}-2a_{241}),\\
(\nabla_{\partial_2}\rho)_{33}&=& \frac{1}{2}(2a_2a_{22}+aa_{112}+a_2a_{22}+ aa_{222}-2a_{242}),\\
(\nabla_{\partial_3}\rho)_{33}&=& \frac{1}{2}(3a_2a_{23} + aa_{113} 
+ a_3 a_{22} + aa_{223} - 2a_{243} - aa_2a_{12}\\
&& + a_4a_{12} + aa_1a_{22} - 2a_1a_{24} + a_2a_{14}),\\
(\nabla_{\partial_4}\rho)_{33} &=& \frac{1}{2}(2a_2a_{24}+aa_{114}+a_4a_{22}+aa_{224}-2a_{244}-a_3a_{12}),\\
(\nabla_{\partial_1}\rho)_{34} &=& \frac{1}{2}(a_{141}+a_{231}
- 2a_1a_{12} - a_2a_{11}),\\
(\nabla_{\partial_2}\rho)_{34}&=& \frac{1}{2}(-2a_2a_{12}-a_1a_{22}+ a_{142}+a_{232}),\\
(\nabla_{\partial_3}\rho)_{34} &=& \frac{1}{4}(- 4a_2a_{13}
- a_1a_{23} + 2a_{143} + 2a_{233} - 2a_{3}a_{12} - aa_1a_{12}\\
&& + a_4a_{22} + a_1a_{14} + aa_2a_{11} - a_4a_{11}),\\
(\nabla_{\partial_4}\rho)_{34} &=& \frac{1}{4}( - a_2a_{14}  - 4a_1a_{24} 
+ 2a_{144} + 2a_{234} - 2a_4a_{12} - a_3a_{22} \\
&& +a_3a_{11} - aa_2a_{12} + aa_1a_{22}+a_2a_{23}),\\
(\nabla_{\partial_1}\rho)_{44} &=& \frac{1}{2}(3a_1a_{11}+aa_{111}-2a_{131}+aa_{122}),\\
(\nabla_{\partial_2}\rho)_{44} &=& \frac{1}{2}(2a_1a_{12} + a_2a_{11}
+ aa_{112} - 2a_{132} + aa_{222}),\\  
(\nabla_{\partial_3}\rho)_{44} &=& \frac{1}{2}( 2a_1a_{13} + a_3a_{11}
+ aa_{113} - 2a_{133} + aa_{223} - a_4a_{12}),\\
(\nabla_{\partial_4}\rho)_{44} &=& \frac{1}{2}(3a_1a_{14} + a_4a_{11}
+ aa_{114} - 2a_{134} + aa_{224} - aa_1a_{12} + a_3a_{12}\\
&& + a_1a_{23} + aa_2a_{11} - 2a_2a_{13})
\end{eqnarray*}
\end{proposition}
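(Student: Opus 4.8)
The plan is to evaluate every listed component directly from the coordinate formula for $\nabla\rho$ displayed just above the statement. Writing $\nabla_{\partial_i}\partial_j=\sum_l\Gamma_{ij}^l\partial_l$, that formula becomes
\begin{eqnarray*}
(\nabla_{\partial_i}\rho)_{jk}=\partial_i\rho_{jk}-\sum_l\Gamma_{ij}^l\,\rho_{lk}-\sum_l\Gamma_{ik}^l\,\rho_{jl}.
\end{eqnarray*}
Since $\rho$ is symmetric, each $(\nabla_{\partial_i}\rho)_{jk}$ is symmetric in $j,k$, so only the pairs with $j\le k$ need be treated; and since (\ref{eq24}) gives $\rho_{11}=\rho_{12}=\rho_{22}=0$, the Ricci tensor is supported only on the pairs $\{1,3\},\{1,4\},\{2,3\},\{2,4\},\{3,3\},\{3,4\},\{4,4\}$. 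This sparsity is what annihilates most of the correction terms.

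Reading the Christoffel symbols off (\ref{eq22}), the simple ones $\Gamma_{13}^1=\Gamma_{14}^2=\frac{1}{2}a_1$, $\Gamma_{23}^1=\Gamma_{24}^2=\frac{1}{2}a_2$, $\Gamma_{34}^1=\frac{1}{2}a_4$, $\Gamma_{34}^2=\frac{1}{2}a_3$ all carry an upper index in $\{1,2\}$, whereas only the two longer lists $\Gamma_{33}^\bullet$ and $\Gamma_{44}^\bullet$ reach the upper indices $3$ and $4$. The computation therefore splits naturally on the differentiating index $i$. For $i\in\{1,2\}$ the base term $\partial_i\rho_{jk}$ is ordinary differentiation and only $\Gamma_{13}^1,\Gamma_{14}^2,\Gamma_{23}^1,\Gamma_{24}^2$ can contribute, each giving a short correction of the form $\frac{1}{2}a_i\,\rho_{1k}$ or $\frac{1}{2}a_i\,\rho_{2k}$; for instance $(\nabla_{\partial_1}\rho)_{13}=\partial_1\rho_{13}=\frac{1}{2}a_{111}$ because $\rho_{11}=0$, while in $(\nabla_{\partial_1}\rho)_{33}$ the lone correction $-a_1\rho_{13}=-\frac{1}{2}a_1a_{11}$ cancels the like term generated by $\partial_1\rho_{33}$.

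For $i\in\{3,4\}$ the full $\Gamma_{33}^\bullet$, $\Gamma_{34}^\bullet$, $\Gamma_{44}^\bullet$ coefficients enter both correction sums, which now reach $\rho_{33},\rho_{34},\rho_{44}$, and in addition the base term must differentiate the quadratic expressions $aa_{11}$, $a_1a_2$, $aa_{22}$ occurring in (\ref{eq24}). This is where the only real difficulty lies: the four heavy entries $(\nabla_{\partial_3}\rho)_{33}$, $(\nabla_{\partial_3}\rho)_{34}$, $(\nabla_{\partial_4}\rho)_{34}$, $(\nabla_{\partial_4}\rho)_{44}$ produce many monomials, among them genuinely cubic products $a\,a_\bullet\,a_{\bullet\bullet}$ that appear when the $aa_1$-type Christoffel coefficients multiply the $\rho_{13}$-type Ricci coefficients. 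Some of these cubic terms cancel against the $a\,a_{\bullet\bullet\bullet}$ pieces coming from $\partial_i(aa_{\bullet\bullet})$ in the base term, while others survive into the stated formula, so the entire task is to expand each product $\Gamma_{ij}^l\rho_{lk}$ separately and collect signs correctly. There is no conceptual obstacle beyond this bookkeeping: once the $i\in\{1,2\}$ block and the off-diagonal $\rho_{13},\rho_{14},\rho_{23},\rho_{24}$ entries are cleared, the four heavy components are each verified by the same mechanical substitution.
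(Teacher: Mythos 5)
Your proposal is correct and follows essentially the same route as the paper: a direct coordinate computation of $(\nabla_{\partial_i}\rho)_{jk}=\partial_i\rho_{jk}-\sum_l\Gamma_{ij}^l\rho_{lk}-\sum_l\Gamma_{ik}^l\rho_{jl}$ using the Christoffel symbols of (\ref{eq22}) and the Ricci components of (\ref{eq24}). Your organization of the bookkeeping (exploiting $\rho_{11}=\rho_{12}=\rho_{22}=0$ and the split by differentiating index) is sound and in fact more explicit than the paper, which simply asserts the result from the same two ingredients.
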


\section{Einstein like Walker metrics}\label{Einsteinlike}

Einstein-like metrics were introduced and first studied by Gray \cite{Gray1978} in the Riemannian framework as natural generalizations 
of Einstein metrics. Since they are defined through conditions 
on the Ricci tensor, their definition extends at once to the 
affine and pseudo-Riemannian setting \cite{Diallo2023}. \\

Next, we assume that, the defining function $a$ on the Walker metric described by (\ref{eq21}) as in the following form:
\begin{eqnarray}\label{eq31}
a(x_1,x_2,x_3,x_4)=x_1b(x_3,x_4) + x_2c(x_3,x_4) + d(x_3,x_4),
\end{eqnarray}
 where $ b, c, d$ are $\mathcal{C}^{\infty}$ real valued functions. 
From scalar curvature, the Walker metric (\ref{eq31}) has vanishing
scalar curvature.

\begin{proposition}
The Walker metric (\ref{eq21}) and (\ref{eq31}) is Einstein if and only 
if the functions $b, c$ satisfies :
\begin{eqnarray*}
b_3 = \frac{1}{2}b^2, \quad
c_4 = \frac{1}{2}c^2, \quad
b_4 + c_3 = bc..
\end{eqnarray*}
\end{proposition}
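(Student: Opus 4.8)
The plan is to substitute the ansatz (\ref{eq31}) directly into the system of five scalar equations characterizing the Einstein condition for the metric (\ref{eq21}), which was derived in Section~\ref{Description}. First I would record the partial derivatives of $a = x_1 b + x_2 c + d$, keeping in mind that $b, c, d$ are functions of $(x_3, x_4)$ only. The crucial observation is that $a$ is affine in the coordinates $x_1, x_2$, so every second-order derivative in those directions vanishes: $a_{11} = a_{22} = a_{12} = 0$. Consequently the first two Einstein equations, $a_{11} - a_{22} = 0$ and $a_{12} = 0$, hold identically, in agreement with the already-noted fact that the scalar curvature $\tau = a_{11} + a_{22}$ vanishes for metrics of the form (\ref{eq31}).

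Next I would evaluate the derivatives that actually survive in the remaining three equations, namely $a_1 = b$, $a_2 = c$, $a_{13} = b_3$, $a_{14} = b_4$, $a_{23} = c_3$, and $a_{24} = c_4$. Substituting these together with $a_{11} = a_{22} = 0$ into the third Einstein equation $2a_2^2 + aa_{11} + aa_{22} - 4a_{24} = 0$ collapses it to $2c^2 - 4c_4 = 0$, that is, $c_4 = \tfrac12 c^2$. The fifth equation $2a_1^2 + aa_{11} + aa_{22} - 4a_{13} = 0$ collapses symmetrically to $2b^2 - 4b_3 = 0$, that is, $b_3 = \tfrac12 b^2$. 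Finally the fourth equation $a_1 a_2 - a_{14} - a_{23} = 0$ becomes $bc - b_4 - c_3 = 0$, that is, $b_4 + c_3 = bc$.

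Since each of these substitutions is an equivalence---the terms involving $a$ itself drop out precisely because they appear only multiplied by $a_{11} + a_{22} = 0$, and all $x_1, x_2$-dependence disappears for the same reason---the three resulting relations are together equivalent to the full Einstein system for the ansatz (\ref{eq31}). This simultaneously settles both implications of the stated characterization.

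I expect no serious obstacle: the content of the proposition is that the affine ansatz (\ref{eq31}) is exactly rigid enough to annihilate the scalar-curvature and the $x_1, x_2$-second-derivative terms, reducing the Einstein PDE system to three first-order equations in the two unknowns $b, c$ over the $(x_3, x_4)$-plane. The only point requiring a moment of care is to verify that the free function $d$ never enters the surviving equations---indeed $d$ contributes to $a$ only through $a_3, a_4, a_{33}, a_{34}, a_{44}$, none of which occur in the Einstein system---so that $d(x_3, x_4)$ remains entirely unconstrained.
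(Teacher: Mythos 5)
Your proposal is correct and is exactly the computation the paper intends (the paper states this proposition without writing out a proof, but the Einstein system it relies on is the one derived in Section~\ref{Description}): substituting $a=x_1b+x_2c+d$ gives $a_{11}=a_{22}=a_{12}=0$, so the first two equations hold identically and the remaining three reduce to $c_4=\tfrac12 c^2$, $b_3=\tfrac12 b^2$, $b_4+c_3=bc$. Your observations that the $aa_{11}+aa_{22}$ terms vanish identically and that $d$ never enters are the only points needing care, and you handle both correctly.
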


\begin{example}
We set: $ b=c= \frac{1}{-\frac{1}{2}u_3 - \frac{1}{2}u_4 + 2 }$. Then
for		 
\begin{eqnarray*}
a(x_1, x_2, ux_3, x_4) = \frac{x_1}{-\frac{1}{2}x_3 - \frac{1}{2}x_4 + 2 } 
+ \frac{x_2}{-\frac{1}{2}x_3 - \frac{1}{2}x_4 + 2 } + \xi(x_3, x_4),
\end{eqnarray*}
the Walker metric (\ref{eq21}) and (\ref{eq31}) is Einstein.
\end{example}
 
 \begin{theorem}
The Walker metric $g_a$ described by (\ref{eq21}) and (\ref{eq31})
is parallel Ricci tensor if and only the functions $b$ and $c$ satisfies
\begin{eqnarray*}
& b^2 = 2b_3+k(x_4),\quad \quad \quad 
c^2=2c_4+l(x_3), \\
& 3cc_3-2c_{34}-2bc_4+b_4c = 0, \quad \quad \quad  
4cb_3 + bc_{3} - 2b_{34} - 2c_{33} - bb_4 = 0, \\
& cb_4 + 4bc_4 - 2b_{44} - 2c_{34} - cc_3 = 0, \quad \quad \quad 
3bb_4 - 2b_{34} + bc_3 - 2cb_3 = 0.
\end{eqnarray*}
\end{theorem}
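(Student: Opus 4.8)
The plan is to unwind the condition of having parallel Ricci tensor, $\nabla\rho = 0$, which by the preceding Proposition means that each listed component of $\nabla\rho$ must vanish, and then to specialize those expressions to the affine profile (\ref{eq31}). First I would record the partial derivatives of $a = x_1 b + x_2 c + d$. Because $b, c, d$ depend only on $(x_3, x_4)$, we have $a_1 = b$ and $a_2 = c$, so every derivative of $a$ carrying at least two indices from $\{1,2\}$ vanishes; in particular $a_{11} = a_{12} = a_{22} = 0$, whereas $a_{13} = b_3$, $a_{14} = b_4$, $a_{23} = c_3$, $a_{24} = c_4$, and the surviving third derivatives reduce to $a_{133} = b_{33}$, $a_{134} = b_{34}$, $a_{144} = b_{44}$, $a_{233} = c_{33}$, $a_{234} = c_{34}$, $a_{244} = c_{44}$, all remaining third derivatives being zero.

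Second, I would substitute these into the components of $\nabla\rho$. The vanishing of $a_{11}, a_{12}, a_{22}$ (and of all their derivatives) is decisive: it annihilates every listed component except the six ``heavy'' ones, namely $(\nabla_{\partial_3}\rho)_{33}$, $(\nabla_{\partial_4}\rho)_{33}$, $(\nabla_{\partial_3}\rho)_{34}$, $(\nabla_{\partial_4}\rho)_{34}$, $(\nabla_{\partial_3}\rho)_{44}$ and $(\nabla_{\partial_4}\rho)_{44}$. Carrying out the substitution, $(\nabla_{\partial_3}\rho)_{44} = 0$ and $(\nabla_{\partial_4}\rho)_{33} = 0$ collapse to the two second-order equations $b_{33} = b b_3$ and $c_{44} = c c_4$, while $(\nabla_{\partial_3}\rho)_{33}$, $(\nabla_{\partial_3}\rho)_{34}$, $(\nabla_{\partial_4}\rho)_{34}$ and $(\nabla_{\partial_4}\rho)_{44}$ reproduce, after clearing the common numerical factor, exactly the four mixed equations displayed in the statement.

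Third, I would integrate the two isolated second-order equations. Since $\partial_3(b^2 - 2b_3) = 2(b b_3 - b_{33}) = 0$, the quantity $b^2 - 2b_3$ is independent of $x_3$ and hence equals a function $k(x_4)$; this gives $b^2 = 2b_3 + k(x_4)$, and the symmetric argument in $x_4$ gives $c^2 = 2c_4 + l(x_3)$. Combined with the four mixed equations, this is precisely the stated system. As each step is an equivalence, the converse holds as well, which establishes the characterization.

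I expect the only genuine obstacle to be the bookkeeping of the second step: one must confirm which of the listed components survive the specialization and verify that the six survivors simplify to the claimed expressions with no sign or factor error. The integration in the third step is the single structural point — recognizing $b^2 - 2b_3$ and $c^2 - 2c_4$ as respectively $x_3$- and $x_4$-independent first integrals — but it is immediate once the two second-order equations are in hand.
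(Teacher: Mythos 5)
Your proposal is correct and follows essentially the same route as the paper: specialize the components of $\nabla\rho$ from the preceding proposition to $a = x_1b + x_2c + d$, note that only the six components with all indices in $\{3,4\}$ survive, and read off the resulting system in $b$ and $c$. You in fact go one step further than the paper's proof, which stops at $bb_3 - b_{33} = 0$ and $cc_4 - c_{44} = 0$ without performing the integration to the first integrals $b^2 = 2b_3 + k(x_4)$ and $c^2 = 2c_4 + l(x_3)$ appearing in the statement; your observation that $\partial_3(b^2 - 2b_3) = 2(bb_3 - b_{33})$ supplies that missing step.
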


\begin{proof}
Recall that, a pseudo-Riemannian manifold $(M,g)$ is parallel Ricci tensor 
or belongs to class $\mathcal{P}$ if and only if its Ricci tensor $\rho$ is parallel, that is,
\begin{eqnarray}\label{eq32}
 (\nabla_X \rho)(Y,Z) = 0,
 \end{eqnarray}
for all vector fields $X,Y,Z$ tangent to $M$. More precisely, as concerns  Walker metrics (\ref{eq21}) and (\ref{eq31}), applying (\ref{eq32}), we 
have : $g_a\in \mathcal{P}$ if and only if:
 \begin{eqnarray*}
(\nabla_{\partial_3}\rho)_{33} &=& 0, \;
(\nabla_{\partial_3}\rho)_{34} = 0, \; 
(\nabla_{\partial_3}\rho)_{44} = 0 ,\\
(\nabla_{\partial_4}\rho)_{33} &=& 0, \; 
(\nabla_{\partial_4}\rho)_{34} = 0, \; 
(\nabla_{\partial_4}\rho)_{44} = 0.
 \end{eqnarray*}
The above system  is equivalent to:
\begin{eqnarray*}
3cc_3 - 2c_{34} - 2bc_4 + b_4c = 0, \;
4cb_3 + bc_{3} - 2b_{34} - 2c_{33} - bb_4 =0 , \\
bb_3 - b_{33} = 0,\; cc_4 - c_{44} = 0 , \\
cb_4 + 4bc_4 - 2b_{44} - 2c_{34} - cc_3 =0, \;
3bb_4 - 2b_{34} + bc_3 - 2cb_3 =0 .
\end{eqnarray*}
\end{proof}

\begin{theorem}
The Walker metric $g_a$ described by (\ref{eq21}) and (\ref{eq31}) is
cyclic parallel Ricci tensor or belongs to class $\mathcal{A}$ if and only 
if the functions $b$ and $c$ satisfies
\begin{eqnarray*}
3cc_3 - 2c_{34} - 2bc_4 + cb_4 = 0,\;
3bb_4 - 2b_{34} + bc_3 - 2cb_3=0.
\end{eqnarray*}
\end{theorem}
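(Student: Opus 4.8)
The plan is to convert membership in Gray's class $\mathcal{A}$ into a PDE system for $b$ and $c$, in the same spirit as the preceding theorem for class $\mathcal{P}$. Recall that $(M,g)$ belongs to $\mathcal{A}$, i.e. has cyclic-parallel Ricci tensor, exactly when
\[
(\nabla_X\rho)(Y,Z) + (\nabla_Y\rho)(Z,X) + (\nabla_Z\rho)(X,Y) = 0
\]
for all $X,Y,Z$. Since $\nabla\rho$ is already symmetric in its last two slots, this is the vanishing of the full symmetrization of $\nabla\rho$, and by multilinearity it is enough to test it on the coordinate frame. So the first step is to record, for each unordered index triple $(i,j,k)$, the scalar equation $(\nabla_{\partial_i}\rho)_{jk} + (\nabla_{\partial_j}\rho)_{ki} + (\nabla_{\partial_k}\rho)_{ij} = 0$.

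Next I would specialize the components of $\nabla\rho$ from the previous Proposition to the restricted defining function (\ref{eq31}). There one has $a_1=b$, $a_2=c$, while $a_{11}=a_{12}=a_{22}=0$ together with all of their higher derivatives. Feeding this into (\ref{eq22})--(\ref{eq24}) and the previous Proposition, every component of $\nabla\rho$ whose differentiation index lies in $\{1,2\}$, or one of whose evaluation indices lies in $\{1,2\}$, drops out. The only survivors are the six components $(\nabla_{\partial_3}\rho)_{33}$, $(\nabla_{\partial_4}\rho)_{33}$, $(\nabla_{\partial_3}\rho)_{34}$, $(\nabla_{\partial_4}\rho)_{34}$, $(\nabla_{\partial_3}\rho)_{44}$, $(\nabla_{\partial_4}\rho)_{44}$, each now an explicit polynomial in $b,c$ and their $x_3,x_4$-derivatives; in particular $(\nabla_{\partial_3}\rho)_{33}=\frac{1}{2}(3cc_3-2c_{34}-2bc_4+cb_4)$ and $(\nabla_{\partial_4}\rho)_{44}=\frac{1}{2}(3bb_4-2b_{34}+bc_3-2cb_3)$, which already encode the two relations asserted in the statement.

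The third step is to run through the cyclic equations. Since every surviving component has all three of its indices in $\{3,4\}$, any triple meeting $\{1,2\}$ gives the trivial identity $0=0$, leaving only the four triples drawn from $\{3,4\}$. The diagonal triples $(3,3,3)$ and $(4,4,4)$ reduce to $(\nabla_{\partial_3}\rho)_{33}=0$ and $(\nabla_{\partial_4}\rho)_{44}=0$, that is, exactly the two displayed equations. The remaining, off-diagonal, triples $(3,3,4)$ and $(3,4,4)$ yield $2(\nabla_{\partial_3}\rho)_{34}+(\nabla_{\partial_4}\rho)_{33}=0$ and $(\nabla_{\partial_3}\rho)_{44}+2(\nabla_{\partial_4}\rho)_{34}=0$.

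The main obstacle is precisely these last two off-diagonal relations: to land on the stated characterization one must check that they impose nothing new, either by exhibiting them as differential consequences of the two diagonal equations, or by re-auditing the symmetrization to see that the cyclic sum genuinely decouples the pure directions from the mixed ones. This is the delicate bookkeeping in the argument; by contrast, the substitutions into the curvature and connection formulas, while lengthy, are entirely mechanical.
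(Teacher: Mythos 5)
Your reduction is set up correctly, and it is in fact more careful than the paper's own argument: the paper passes to the Killing--tensor reformulation $(\nabla_X\rho)(X,X)=0$ but then tests it only on the two coordinate vectors $\partial_3$ and $\partial_4$, which silently discards exactly the mixed triples you isolate. The obstacle you flag at the end, however, is not removable bookkeeping: the two off-diagonal cyclic equations are genuinely independent of the two diagonal ones, so the last step of your plan cannot be carried out. Specializing $\nabla\rho$ to the ansatz (\ref{eq31}) gives
$(\nabla_{\partial_3}\rho)_{34}=\tfrac14(-4cb_3-bc_3+2b_{34}+2c_{33}+bb_4)$,
$(\nabla_{\partial_4}\rho)_{33}=cc_4-c_{44}$,
$(\nabla_{\partial_3}\rho)_{44}=bb_3-b_{33}$ and
$(\nabla_{\partial_4}\rho)_{34}=\tfrac14(-cb_4-4bc_4+2b_{44}+2c_{34}+cc_3)$,
so the triples $(3,3,4)$ and $(3,4,4)$ contribute the additional conditions
\begin{align*}
4cb_3+bc_3-2b_{34}-2c_{33}-bb_4-2(cc_4-c_{44})&=0,\\
cb_4+4bc_4-2b_{44}-2c_{34}-cc_3-2(bb_3-b_{33})&=0,
\end{align*}
and these are not algebraic or differential consequences of the two relations in the statement.

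A concrete witness: take $b\equiv 0$ and $c=x_4$, i.e.\ $a=x_2x_4$. Both displayed equations of the theorem hold identically, yet $\rho_{33}=\tfrac12(x_4^2-2)$ gives $(\nabla_{\partial_4}\rho)_{33}=x_4$ while $(\nabla_{\partial_3}\rho)_{34}=0$, so the cyclic sum over $(\partial_3,\partial_3,\partial_4)$ equals $x_4\neq 0$ and the metric is not in $\mathcal{A}$. Hence the ``if'' direction of the statement fails as written; the correct characterization of class $\mathcal{A}$ must include the two extra equations above. Your instinct that the decoupling of the pure directions from the mixed ones needed auditing was exactly right --- the audit shows it does not decouple --- but as submitted your proof is incomplete, since it leaves that decisive verification open (and, had you carried it out, it would have refuted the stated equivalence rather than confirmed it).
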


\begin{proof}
Recall tat, a pseudo-Riemannian manifold $(M,g)$ is cyclic parallel Ricci tensor or belongs to class $\mathcal{A}$ if and only if its Ricci tensor 
$\rho$ satisfy
\begin{eqnarray}\label{eq33}
(\nabla_X \rho)(Y,Z) + (\nabla_Y \rho)(X,Z)+(\nabla_Z \rho)(Y,X) = 0,
\end{eqnarray}
 for all vector fields $X, Y,Z$ tangent to $M$. The relation (\ref{eq33}) is equivalent to requiring that $\rho$ is a Killing tensor, that is,
\begin{eqnarray}\label{eq34}
(\nabla_X \rho)(X,X)=0.
\end{eqnarray}
More precisely, as concerns Walker metrics (\ref{eq21}) and
(\ref{eq31}), applying (\ref{eq34}), we have: $g_a\in \mathcal{A}$ if 
and only if:
\begin{eqnarray*}
(\nabla_{\partial_3}\rho)_{33} = 0 , \; 
(\nabla_{\partial_4}\rho)_{44} = 0,
  \end{eqnarray*}
 that is equivalent to 
 \begin{eqnarray*}
 3cc_3 - 2c_{34} - 2bc_4 + b_4c = 0,\;
3bb_4 - 2b_{34} + bc_3 - 2cb_3 = 0.
 \end{eqnarray*}
\end{proof}

\begin{theorem}
The Walker metric $g_a$ described by (\ref{eq21}) and (\ref{eq31})
is Ricci-Codazzi manifold or belongs to class $\mathcal{B}$ if and only 
if the functions $b$ and $c$ satisfies
\begin{eqnarray*}
4cb_3 + bc_3 - 2b_{34} - 2c_{33} - bb_4 + 2cc_4 - 2c_{44} &=& 0,\\
cb_4 + 4bc_4 - 2b_{44} - 2c_{34} - cc_3 + 2bb_3 - 2b_{33}=0.
\end{eqnarray*}
\end{theorem}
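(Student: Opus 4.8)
The plan is to translate the class $\mathcal{B}$ condition into components, exactly as was done for the classes $\mathcal{P}$ and $\mathcal{A}$ above, and then feed in the special form (\ref{eq31}). Recall that $(M,g)$ belongs to $\mathcal{B}$ precisely when its Ricci tensor is a Codazzi tensor, that is
\begin{equation*}
(\nabla_X\rho)(Y,Z) = (\nabla_Y\rho)(X,Z)
\end{equation*}
for all vector fields $X,Y,Z$ tangent to $M$. Since $\rho$ is symmetric, the difference of the two sides is antisymmetric in the pair $(X,Y)$, so in the coordinate frame $\{\partial_i\}$ it suffices to impose $(\nabla_{\partial_i}\rho)_{jk} = (\nabla_{\partial_j}\rho)_{ik}$ for $i<j$ and for each $k$.

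First I would specialize the components of $\nabla\rho$ computed in the preceding Proposition to the ansatz (\ref{eq31}). The decisive simplification is that $a=x_1b+x_2c+d$ gives $a_1=b$, $a_2=c$, that $a_{11}=a_{12}=a_{22}=0$ and $a_{13}=b_3,\ a_{14}=b_4,\ a_{23}=c_3,\ a_{24}=c_4$, and that every third-order derivative of $a$ with at least two differentiations in the directions $x_1,x_2$ vanishes. Scanning the list, this forces all components of $\nabla\rho$ to vanish except the six ``heavy'' ones already met in the parallel-Ricci theorem, namely $(\nabla_{\partial_3}\rho)_{33}$, $(\nabla_{\partial_4}\rho)_{33}$, $(\nabla_{\partial_3}\rho)_{34}$, $(\nabla_{\partial_4}\rho)_{34}$, $(\nabla_{\partial_3}\rho)_{44}$ and $(\nabla_{\partial_4}\rho)_{44}$.

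Next I would check that almost every Codazzi equation is vacuous. Any equation $(\nabla_{\partial_i}\rho)_{jk}=(\nabla_{\partial_j}\rho)_{ik}$ having an index equal to $1$ or $2$ reduces to $0=0$ by the previous step, and the case $i=j$ carries no content. The only pair with $i<j$ for which both sides can be nonzero is $(i,j)=(3,4)$, leaving exactly two genuine equations, one for $k=3$ and one for $k=4$:
\begin{align*}
(\nabla_{\partial_3}\rho)_{34} &= (\nabla_{\partial_4}\rho)_{33}, \\
(\nabla_{\partial_3}\rho)_{44} &= (\nabla_{\partial_4}\rho)_{34}.
\end{align*}
Substituting the specialized values of these four components and clearing the overall factors of $\tfrac14$ and $\tfrac12$ then yields the two displayed identities in $b$ and $c$. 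Conceptually the whole argument is the bookkeeping that collapses the Codazzi system to these two equations; the only real labor is the ensuing algebraic reduction.

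Finally, as a consistency check I would verify the classical relation $\mathcal{P}=\mathcal{A}\cap\mathcal{B}$: combining the two equations just obtained with the cyclic-parallel equations should recover precisely the six equations of the parallel-Ricci theorem, which would both confirm the reduction and pin down the numerical coefficients in front of $cc_4,c_{44}$ and $bb_3,b_{33}$. The main obstacle I anticipate is purely organisational, namely keeping track of the many vanishing third-order derivatives so as to be certain that no further Codazzi component survives the specialization (\ref{eq31}).
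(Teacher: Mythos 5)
Your proposal is correct and follows essentially the same route as the paper: the paper likewise reduces the Codazzi condition, under the ansatz (\ref{eq31}), to the two component equations $(\nabla_{\partial_3}\rho)_{34}=(\nabla_{\partial_4}\rho)_{33}$ and $(\nabla_{\partial_3}\rho)_{44}=(\nabla_{\partial_4}\rho)_{34}$ and then substitutes the specialized derivatives of $a$. Your extra care about the $\tfrac14$ versus $\tfrac12$ prefactors when clearing denominators is well placed, since that is exactly where the relative coefficients of the $cc_4-c_{44}$ and $bb_3-b_{33}$ terms are determined.
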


\begin{proof}
Recall that, a pseudo-Riemannian manifold $(M,g)$ is called Ricci-Codazzi manifold i belongs to class $\mathcal{B}$ if and only if its Ricci tensor 
is a Codazzi tensor, that is,
\begin{eqnarray}\label{eq35}
(\nabla_X \rho)(Y,Z) = (\nabla_Y \rho)(X,Z),
\end{eqnarray}
for all vector fields $X, Y,Z$ tangent to $M$. More precisely, as concerns Walker metrics (\ref{eq21}) and (\ref{eq31}), applying (\ref{eq34}), we 
have: $g_a\in \mathcal{B}$ if and only if:
 \begin{eqnarray*}
(\nabla_{\partial_3}\rho)_{34} = (\nabla_{\partial_4}\rho)_{33} , \; 
(\nabla_{\partial_3}\rho)_{44} = (\nabla_{\partial_4}\rho)_{34}, 
\end{eqnarray*}
that is equivalent to
\begin{eqnarray*}
4cb_3 + bc_{3} - 2b_{34} - 2c_{33} - bb_4 + 2cc_4 - 2c_{44} &=& 0, \\   
cb_4 + 4bc_4 - 2b_{44} - 2c_{34} - cc_3 + 2bb_3 - 2b_{33} &=& 0. 
\end{eqnarray*}
\end{proof}

With our metric (\ref{eq21}) and (\ref{eq31}), we have:
\begin{eqnarray*}
\mathcal{E} \subset \mathcal{P} = \mathcal{A} \cap \mathcal{B} \subset\mathcal{A} \cup \mathcal{B}\subset\mathcal{C}.
\end{eqnarray*}

\end{document}